\documentclass{amsart}
\usepackage[utf8]{inputenc}
\usepackage[english]{babel}
\usepackage{amsmath}
\usepackage{amsfonts}
\usepackage{amssymb}
\usepackage{amsthm}
\usepackage{cmap}
\usepackage{tikz-cd}
\usepackage[mathscr]{euscript}
\usepackage{hyperref}

\newtheorem{prop}{Proposition}
\newtheorem*{prop*}{Proposition}

\newtheorem{thm}{Theorem}

\newtheorem*{thm*}{Theorem}

\newtheorem{cor}{Corollary}
\theoremstyle{definition}

\newtheorem*{ntt*}{Notation}
\theoremstyle{remark}
\newtheorem{rmk}{Remark}

\DeclareMathOperator{\Q}{\mathbb Q}
\DeclareMathOperator{\Z}{\mathbb Z}

\DeclareMathOperator{\PP}{\mathbb P}
\DeclareMathOperator{\Co}{\mathbb C}
\DeclareMathOperator{\Oh}{\mathcal O}
\DeclareMathOperator{\Lk}{\mathrm Lk}

\DeclareMathOperator{\rk}{\mathrm{rk}}
\DeclareMathOperator{\Sym}{\mathrm{Sym}}

\newcommand{\bm}[1]{H_{#1}^{BM}}

\title{A Division Theorem for Nodal Projective Hypersurfaces}
\date{}
\author{Nikolay Konovalov}
\address{\parbox{\linewidth}{Faculty of Mathematics, HSE University, \\6 Usacheva ulitsa, Moscow 119048, Russia \\
\\
Department of Mathematics, University of Notre Dame, \\255 Hurley Hall, Notre Dame, IN 46556, USA}}
\email{nkonoval@nd.edu, nikolay.konovalov.p@gmail.com}

\begin{document}
\maketitle
\begin{abstract}
Let $V_{n,d}$ be the variety of equations for hypersurfaces of degree $d$ in $\mathbb{P}^n(\mathbb{C})$ with singularities not worse than simple nodes. We prove that the orbit map $G'=SL_{n+1}(\mathbb{C}) \to V_{n,d}$, $g\mapsto g\cdot s_0$, $s_0\in V_{n,d}$ is surjective on the rational cohomology if $n>1$, $d\geq 3$, and $(n,d)\neq (2,3)$. As a result, the Leray-Serre spectral sequence of the map from $V_{n,d}$ to the homotopy quotient $(V_{n,d})_{hG'}$ degenerates at $E_2$, and so does the Leray spectral sequence of the quotient map $V_{n,d}\to V_{n,d}/G'$ provided the geometric quotient $V_{n,d}/G'$ exists. We show that the latter is the case when $d>n+1$.
\end{abstract}

We write $\Pi_{n,d}$ for the space of homogeneous complex polynomials of degree $d$ in $n+1$ variables, 
$U_{n,d}\subset \Pi_{n,d}$ for the affine subvariety of those homogeneous polynomials which give~\emph{smooth} hypersurfaces in $\PP^n(\Co)$, and $\Sigma_{n,d}=\Pi_{n,d}\setminus U_{n,d}$ for its complement. Define for $l=1,\ldots,n+1$
$$\Sigma_{n,d}^{(l)}=\{f\in\Sigma_{n,d}\;|\; V(f)^{\mathrm{sing}}\cap \Lambda^{n-l+1} \neq \emptyset\}.$$
Here, $V(f)\subset \PP^n(\Co)$ is the zero locus of $f$, $V(f)^{\mathrm{sing}}$ is the singular locus of the hypersurface $V(f)$, and $\Lambda^{n-l+1}$ 
is a \emph{fixed} linear subspace in $\PP^n(\Co)$ of codimension $l-1$. In other words, $f\in \Sigma^{(l)}_{n,d}$ if and only if $V(f)$ has at least one singular point in $\Lambda^{n-l+1}$. 
Note that $\Sigma^{(l)}_{n,d}\subset \Sigma_{n,d}$ is an irreducible subvariety of codimension $l$ in $\Pi_{n,d}$, so it defines a fundamental class in the singular Borel-Moore homology: $$[\Sigma^{(l)}_{n,d}]\in \bm{*}(\Sigma_{n,d},\Z).$$ We denote by $\Lk_l \in H^{2l-1}(U_{n,d},\Z)$ the Alexander dual cohomology class of $[\Sigma^{(l)}_{n,d}]$. We point out that $\Lk_l$ does not depend on the choice of $\Lambda^{n-l+1}\subset \PP^n(\Co)$.

Note that the group $G=GL_{n+1}(\Co)$ acts on $\Pi_{n,d}$ by change of variables and this action preserves $U_{n,d}$. We will denote by $U_{n,d}/G$ the geometric quotient of the affine variety $U_{n,d}$ by the $G$-action, which exists if $d\geq 3$ by~\cite[Proposition 4.2]{GITbook} and is affine. (In this paper we use the definition of the geometric quotient given in~\cite{GITbook}.) 

Fix an element $s_0\in U_{n,d}$ and consider the \emph{orbit map} $O\colon G\to U_{n,d}$, $g\mapsto g\cdot s_0$. C.~Peters and J.~Steenbrink~\cite{PS03} proved the following theorem:
\begin{thm}[\cite{PS03}]\label{Division theorem, PS}
The induced map $O^*\colon H^*(U_{n,d},\Q) \to H^*(G,\Q)$ does not depend on $s_0$, and the classes $O^*(\Lk_l)\in H^*(G,\Q),l=1,\ldots, n+1$ are multiplicative generators if $d\geq 3$. In particular, there is an isomorphism of rings:
$$H^*(U_{n,d},\Q) \cong H^*(G,\Q)\otimes H^*(U_{n,d}/G,\Q)$$
compatible with mixed Hodge structures. \qed
\end{thm}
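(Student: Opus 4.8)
Here is how I would approach the three assertions.

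\emph{Independence and reduction to indecomposables.} First I would dispose of the independence of $O^*$: since $U_{n,d}$ is a nonempty Zariski-open subset of the irreducible variety $\Pi_{n,d}$ it is path-connected, so any two basepoints $s_0,s_1$ are joined by a path $\sigma$, and $(g,t)\mapsto g\cdot\sigma(t)$ is a homotopy between the two orbit maps; hence $O^*$ does not depend on $s_0$. For the main assertion, recall that for $d\geq 3$ a smooth hypersurface has finite linear automorphism group, so the stabilizer $G_{s_0}$ is finite and $O$ factors as $G\twoheadrightarrow G/G_{s_0}=G\cdot s_0\hookrightarrow U_{n,d}$ with the first arrow a rational cohomology isomorphism. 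Thus $O^*$ may be computed on the orbit, and its target is the exterior algebra $H^*(G,\Q)\cong\bigwedge_{\Q}(x_1,x_3,\dots,x_{2n+1})$ with $\deg x_{2l-1}=2l-1$. Each $\Lk_l$ has odd degree $2l-1$, and since $H^*(G,\Q)$ is free graded-commutative on the $x_{2l-1}$, a family of odd classes is a set of multiplicative generators as soon as it maps to a basis of the indecomposable quotient; so it suffices to show that $O^*(\Lk_l)$ has nonzero image in the one-dimensional space of indecomposables in degree $2l-1$ (spanned by $\bar x_{2l-1}$). Surjectivity of $O^*$ as an algebra map then follows.

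\emph{Detection by linking numbers.} The indecomposables are dual to the primitive homology $PH_{2l-1}(G,\Q)$, which is one-dimensional and represented by the rational generator $\gamma_l\colon S^{2l-1}\to G$ of $\pi_{2l-1}(G)\otimes\Q=\pi_{2l-1}(U(n+1))\otimes\Q$ (nonzero exactly for $l=1,\dots,n+1$, by Bott). As decomposable cohomology classes annihilate primitive homology, it is enough to prove $\langle O^*(\Lk_l),[\gamma_l]\rangle\neq 0$. By Alexander duality in the contractible space $\Pi_{n,d}$, this pairing is the linking number of the $(2l-1)$-cycle $O\circ\gamma_l$ with the codimension-$l$ subvariety $\Sigma^{(l)}_{n,d}$, and filling $O\circ\gamma_l$ by a $2l$-chain in $\Pi_{n,d}$ converts it into an intersection number with $\Sigma^{(l)}_{n,d}$. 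For $l=1$ this is transparent: taking $\gamma_1(\theta)=e^{i\theta}\cdot\id$ yields the loop $\theta\mapsto e^{-id\theta}s_0$, whose linking number with $\Sigma^{(1)}_{n,d}=\Sigma_{n,d}$ is $d\cdot\deg\Sigma_{n,d}\neq 0$.

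\emph{The main computation.} For $l\geq 2$ the plan is to evaluate the intersection number using the incidence variety
\[
Z^{(l)}=\{(f,p)\;:\;p\in\Lambda^{n-l+1}\cap V(f)^{\mathrm{sing}}\}\subset\Pi_{n,d}\times\Lambda^{n-l+1},
\]
whose second projection is an affine-linear fibration (over each $p$ the conditions $\nabla f(p)=0$ cut out a subspace of codimension $n+1$, so $\dim Z^{(l)}=\dim\Pi_{n,d}-l$) and whose first projection is generically one-to-one onto $\Sigma^{(l)}_{n,d}$, exhibiting the fundamental class $[\Sigma^{(l)}_{n,d}]$. I would arrange $\gamma_l$ to act only on the $l$ coordinates transverse to $\Lambda^{n-l+1}$, so that the filling chain meets $Z^{(l)}$ in a controlled locus, and reduce the count to the Euler number of an associated rank-$l$ bundle over $S^{2l-1}$ (equivalently a top-Chern/degree computation), which is a nonzero integer independent of the auxiliary choices. \textbf{This transversality-and-nonvanishing step is the main obstacle}: one must take the filling transverse to $\Sigma^{(l)}_{n,d}$ along its smooth locus, check that the higher strata $\Sigma^{(l')}_{n,d}$ with $l'>l$ (of strictly larger codimension) contribute nothing, and identify the outcome with an explicitly nonzero number.

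\emph{From surjectivity to the ring isomorphism.} Granting that the $\Lk_l$ generate, I would conclude via the Leray spectral sequence of $p\colon U_{n,d}\to U_{n,d}/G$. Finite stabilizers make every fibre rationally a copy of $G$, and $R^tp_*\Q$ is a local system with fibre $H^t(G,\Q)$ whose monodromy is trivial, factoring through inner automorphisms of the connected group $G$; hence $E_2^{s,t}=H^s(U_{n,d}/G,\Q)\otimes H^t(G,\Q)$. The global classes $\Lk_l$ restrict on each fibre to the generators of $H^*(G,\Q)$ just produced, so the Leray–Hirsch theorem forces degeneration at $E_2$ and gives an $H^*(U_{n,d}/G,\Q)$-module isomorphism $H^*(U_{n,d},\Q)\cong H^*(U_{n,d}/G,\Q)\otimes H^*(G,\Q)$. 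Being of odd degree the $\Lk_l$ square to zero, so the subalgebra they generate is an exterior algebra mapping isomorphically onto $H^*(G,\Q)$, which upgrades the module isomorphism to one of rings. Finally the Leray spectral sequence underlies one of mixed Hodge structures and the $\Lk_l$, as Alexander duals of algebraic cycles, are Tate classes, so the decomposition is compatible with the mixed Hodge structures.
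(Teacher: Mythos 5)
This theorem is quoted from \cite{PS03} and carries no proof in the paper itself (note the \(\qed\) in the statement), so there is nothing internal to compare your outline against; it has to be measured against the cited source. Your framework is the standard one and is essentially correct as a plan: independence of \(s_0\) via path-connectedness of \(U_{n,d}\); reduction of ``multiplicative generation'' to the nonvanishing of the pairing of \(O^*(\Lk_l)\) with the primitive spherical class in \(H_{2l-1}(G,\Q)\) (decomposables annihilate primitives, and odd classes hitting a basis of indecomposables generate the exterior algebra); conversion of that pairing into an intersection number with \(\Sigma^{(l)}_{n,d}\) by Alexander duality; the correct \(l=1\) computation; and a Leray--Hirsch argument for the ring isomorphism. (Minor caveat on the last step: \(q\colon U_{n,d}\to U_{n,d}/G\) is not a locally trivial fibration, so the identification of \(R^tq_*\Q\) as a constant system with stalk \(H^t(G,\Q)\) itself needs the geometric-quotient input that \cite{PS03} supplies.)

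The genuine gap is exactly where you flag it: for \(2\le l\le n+1\) the nonvanishing of \(\langle O^*(\Lk_l),[\gamma_l]\rangle\) \emph{is} the theorem, and you do not prove it. Two concrete problems. First, ``the Euler number of an associated rank-\(l\) bundle over \(S^{2l-1}\)'' does not typecheck: the Euler class of a rank-\(l\) complex bundle lives in degree \(2l\) and \(H^{2l}(S^{2l-1})=0\); what is needed is the degree of an explicit section over the \(2l\)-chain filling \(O\circ\gamma_l\) (a relative Euler number), and that integer must actually be computed. Second, the hoped-for conclusion that this integer is ``nonzero, independent of the auxiliary choices'' cannot come from transversality and positivity of codimension alone, because the statement is false for \(d=2\): there \(U_{n,2}\) is a single \(G\)-orbit with reductive positive-dimensional stabilizer \(O(n+1,\Co)\), so \(O^*\) is certainly not surjective. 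Hence the hypothesis \(d\ge 3\) must enter precisely in your deferred step; indeed the relevant intersection numbers are, up to normalization, the factors \((d-1)^{n+1}+(-1)^{l+1}(d-1)^{n+1-l}\) appearing in Remark~\ref{remark:finite stabilizers}, which vanish for \(d=2\) and even \(l\). Carrying out that explicit count (critical points of a generic member of the family along \(\Lambda^{n-l+1}\)) is the content of the proof in \cite{PS03}, and it is the part your proposal leaves open.
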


Later this theorem was generalized by A.~Gorinov and the author in~\cite{GK17} to the case of a general reductive group action on the space of~\emph{regular} sections of an equivariant vector bundle over a smooth complex compact variety. In this paper, we study a generalization in a different direction. Namely, we consider a group action not only on the space of regular sections, but on a space of sections with singularities of a certain type.

Let $V_{n,d}$ be the open subvariety of $\Pi_{n,d}$ formed by all homogeneous polynomials $f$ such that the kernel of the Hessian matrix of $f$ at any non-zero singular point $x$ is one-dimensional (or, equivalently, the hypersurface $V(f)$ has no singularities other than simple nodes.) The special linear group $G'=SL_{n+1}(\Co)$ acts on $V_{n,d}$. Fix $s_0\in V_{n,d}$ and consider the orbit map $O'\colon G' \to V_{n,d}$, $g\mapsto g\cdot s_0$. In Theorem~\ref{Division theorem, nodal}, we will show that the induced map of the rational cohomology $${O'}^*\colon H^*(V_{n,d},\Q) \to H^*(G',\Q) $$ is surjective if $n>1,d\geq 3$, and $(n,d)\neq (2,3)$. This implies that there is an isomorphism of rings $$H^*(V_{n,d},\Q)\cong H^*(G',\Q)\otimes H^*_{G'}(V_{n,d},\Q)$$
compatible with mixed Hodge structures, where $H^*_{G'}(V_{n,d},\Q)$ is the equivariant cohomology ring. Finally, we show in Proposition~\ref{proposition:geometric quotient} that a geometric quotient $V_{n,d}/G'$ exists if $d>n+1$, and so in this case we have $H^*_{G'}(V_{n,d},\Q) \cong H^*(V_{n,d}/G',\Q)$, see Corollary~\ref{corollary:geometric quotient}.

We also note that Theorem~\ref{Division theorem, nodal} was used in~\cite{BG22} to compute the cohomology ring $H^*(V_{n,d},\Q)$ if $(n,d)=(2,4)$.

\bigskip

We begin with generalities on spaces of \emph{regular} sections and sections with \emph{nodal singularities}. Let $L$ be a line bundle over a smooth complex projective variety $X$. Let us denote by $J^rL$ the $r$-th jet bundle of $L$, cf.~\cite[Chapter~16.7]{EGAIV}. We recall that $J^0L=L$ and there are maps of vector bundles over $X$: 
\begin{equation}\label{eq:0}
j_r\colon X \times \Gamma(X,L) \to J^rL.
\end{equation}
Moreover, there are short exact sequences:
\begin{equation}\label{eq:1}
0\to \Sym^r(\Omega^1_X)\otimes L \to J^rL \to J^{r-1}L \to 0,
\end{equation}
and the right map is compatible with~\eqref{eq:0}. If $L$ is very ample, then $j_1$ is surjective, cf. \cite[Proposition II.7.3]{Hartshorne}; we denote its kernel by $\widetilde{\Sigma}(L)\subset X \times \Gamma(L)$. By~\eqref{eq:1}, the map $j_2$ restricted to $\widetilde{\Sigma}(L)$ lifts to a map
\begin{equation}\label{eq:2}
\tilde{j}_2\colon \widetilde{\Sigma}(L) \to L\otimes \Sym^2(\Omega^1_X) \hookrightarrow L\otimes \Omega^1_X \otimes \Omega^1_X.
\end{equation}

Consider now the projectivization $\PP(T_X)$ of the tangent bundle $T_X$. We write $p\colon \PP(T_X) \to X$ for the projection map and $\Oh_{\PP(T_X)}(1)$ for the dual of the tautological line bundle $\Oh_{\PP(T_X)}(-1)$ over $\PP(T_X)$, cf.~\cite[Chapter II.7]{Hartshorne}. We have canonical isomorphisms:
$$
p_*(\Oh_{\PP(T_X)}(1))\cong(TX)^*\cong\Omega^1_X,
$$
$$
p_*(p^*(L))\cong L,
$$
and
$$
p_*(p^*(\Omega^1_X))\cong \Omega^1_X.
$$
These isomorphisms and the projection formula give a canonical isomorphism:
$$
L\otimes \Omega^1_X \otimes \Omega^1_X \cong p_*(p^*(L\otimes \Omega^1_X)\otimes \Oh_{\PP(T_X)}(1)).
$$
We now apply the adjunction $p^*\dashv p_*$ to~\eqref{eq:2} and obtain a map:
\begin{equation}\label{eq:3}
h\colon p^*(\widetilde{\Sigma}(L)) \to p^*(L\otimes \Omega^1_X) \otimes \Oh_{\PP(T_X)}(1).
\end{equation}
Informally, $h$ works as follows. If $x\in X$ is a point, then over $x$ we have
\begin{align}
h_x\colon \PP(T_{x,X}) \times \widetilde{\Sigma}_x(L) &\to \PP(T_{x,X}) \times (L_x\otimes \Omega^1_{x,X})\\
h_x([l],s)&=([l],j_2(s)(x)(l)) \label{eq:4}
\end{align}
Here, $l\in T_{x,X}$, $[l]\in \PP(T_{x,X})$ is its equivalence class, and we consider $j_2(s)(x)$ as a map from $T_{x,X}$ to $L_x \otimes \Omega^1_{x,X}$ using $j_1(s)(x)=0$. We note that the twist by $\Oh_{\PP(T_X)}(1)$ is necessary to make~\eqref{eq:4} independent of a choice of a representative $l$ for $[l]$.

Suppose now that $h$ is surjective and denote its kernel by $\widetilde{N}(L)$. Note that $$\varphi\colon\widetilde{N}(L) \to \widetilde{\Sigma}(L)$$ is a proper map between the total spaces of vector bundles. We wish to compute $$\varphi_*\colon\bm{*}(\widetilde{N}(L),\Z) \to \bm{*}(\widetilde{\Sigma}(L),\Z),$$ where $\bm{*}(-,\Z)$ are the singular Borel-Moore homology groups with integer coefficients. Since $\widetilde{N}(L)$ and $\widetilde{\Sigma}(L)$ are oriented vector bundles over $\PP(T_X)$ and $X$ respectively, we have the Thom isomorphisms
\begin{align}
H_*(X,\Z)&\xrightarrow{\sim}\bm{*+2\rk \widetilde{\Sigma}(L)}(\widetilde{\Sigma}(L),\Z), \label{eq:6} \\
H_*(\PP(T_X),\Z) &\xrightarrow{\sim}\bm{*+2\rk \widetilde{N}(L)}(\widetilde{N}(L),\Z) \label{eq:7}.
\end{align}
Here, $\rk$ denotes the \emph{complex} rank of a vector bundle. The next proposition is well known, cf.~\cite{MS74}.
\begin{prop}\label{prop:1}
Under the Thom isomorphisms~\eqref{eq:6}, \eqref{eq:7}, the map
\begin{equation}\label{eq:8}
\varphi_*\colon \bm{*}(\widetilde N(L),\Z) \to \bm{*}(\widetilde{\Sigma}(L),\Z)
\end{equation}
identifies with
\begin{align}
H_*(\PP(T_X),\Z) &\to H_{*-2(r_1-r_2)}(X,\Z) \\
y &\mapsto p_*(y \frown e), \label{eq:9}
\end{align}
where $r_1=\rk\widetilde{\Sigma}(L), r_2=\rk\widetilde{N}(L)$, and $e\in H^{2(r_1-r_2)}(\PP(T_X),\Z)$ is the Euler class of the quotient bundle $$p^*(\widetilde{\Sigma}(L))/\widetilde{N}(L)\cong p^*(L\otimes \Omega^1_X) \otimes \Oh_{\PP(T_X)}(1).\eqno\qed$$ 
\end{prop}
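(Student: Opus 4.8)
The plan is to factor the proper map $\varphi$ as the composite
$$\widetilde{N}(L) \xrightarrow{\;i\;} p^*(\widetilde{\Sigma}(L)) \xrightarrow{\;\bar{p}\;} \widetilde{\Sigma}(L),$$
where $i$ is the closed inclusion of the subbundle $\widetilde{N}(L)=\ker h$ and $\bar{p}$ is the canonical bundle map covering the projection $p\colon \PP(T_X)\to X$. Since $p$ is proper (it is a projective bundle) and $i$ is a closed embedding, both factors are proper and $\varphi_*=\bar{p}_*\circ i_*$, so I would compute the two pushforwards separately and compose. Throughout I interpret the Thom isomorphisms \eqref{eq:6}, \eqref{eq:7} as the flat pullbacks along the bundle projections $\pi_\Sigma\colon\widetilde{\Sigma}(L)\to X$ and $\pi_N\colon\widetilde{N}(L)\to\PP(T_X)$; recall that flat pullback along a complex vector bundle of rank $r$ is an isomorphism on Borel--Moore homology raising degree by $2r$.

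For $\bar{p}_*$, I would use the Cartesian square
\[
\begin{tikzcd}
p^*(\widetilde{\Sigma}(L)) \arrow[r, "\bar{p}"] \arrow[d, "q"'] & \widetilde{\Sigma}(L) \arrow[d, "\pi_\Sigma"] \\
\PP(T_X) \arrow[r, "p"] & X
\end{tikzcd}
\]
where $q$ is the projection of the pullback bundle. The base-change compatibility between proper pushforward and flat pullback in this square reads $\bar{p}_*\circ q^*=\pi_\Sigma^*\circ p_*$, which is exactly the statement that, under the Thom isomorphisms $q^*$ and $\pi_\Sigma^*$, the map $\bar{p}_*$ is identified with the usual proper pushforward $p_*\colon H_*(\PP(T_X))\to H_*(X)$.

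For $i_*$, choosing a Hermitian metric gives a $C^\infty$ splitting of the short exact sequence $0\to\widetilde{N}(L)\to p^*(\widetilde{\Sigma}(L))\to Q\to 0$, where $Q=p^*(L\otimes\Omega^1_X)\otimes\Oh_{\PP(T_X)}(1)$ has rank $r_1-r_2$. Under the resulting isomorphism $p^*(\widetilde{\Sigma}(L))\cong\pi_N^*Q$ over the total space $\widetilde{N}(L)$, the inclusion $i$ becomes the zero section. The self-intersection formula for the zero section of an oriented bundle (the content of \cite{MS74}) then gives, over $\widetilde{N}(L)$, that $i_*(\alpha)$ corresponds to $\alpha\frown e(\pi_N^*Q)$; using $e(\pi_N^*Q)=\pi_N^*e(Q)$ and the compatibility of flat pullback with cap products by pulled-back classes, this rewrites under the Thom isomorphisms over $\PP(T_X)$ as $y\mapsto y\frown e(Q)$ with $e(Q)\in H^{2(r_1-r_2)}(\PP(T_X))$.

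Composing the two steps and invoking the projection formula $p_*\bigl((y\frown e)\bigr)$ once more yields $\varphi_*(y)=p_*(y\frown e)$, exactly \eqref{eq:9}. The degree bookkeeping also matches: the source $\bm{k}(\widetilde{N}(L))\cong H_{k-2r_2}(\PP(T_X))$ maps to $H_{k-2r_1}(X)\cong\bm{k}(\widetilde{\Sigma}(L))$, and $y\mapsto p_*(y\frown e)$ drops degree by $2(r_1-r_2)$ as required. I expect the main obstacle to be purely organizational: one must fix orientation conventions so that the two Thom isomorphisms and the Euler class are compatibly oriented, and confirm that the self-intersection formula and base-change identity are being applied in \emph{Borel--Moore} homology rather than ordinary homology of compact manifolds. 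The latter is legitimate precisely because the hypothesis that $h$ is surjective makes $\widetilde{N}(L)$ a genuine subbundle and hence $\varphi$ proper.
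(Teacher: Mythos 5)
Your argument is correct. The paper offers no proof of this proposition (it is stated as well known with a pointer to \cite{MS74}), and your two-step factorization --- base change in the Cartesian square for $\bar p_*$, and the self-intersection/Euler-class formula for the subbundle inclusion $i_*$ --- is precisely the standard argument that citation is meant to invoke, with the intermediate Thom isomorphism for $p^*(\widetilde{\Sigma}(L))$ over $\PP(T_X)$ making the two identifications compose correctly. Two cosmetic remarks: the final appeal to the projection formula is unnecessary (the two steps simply compose to give $y\mapsto p_*(y\frown e)$), and the orientation worry is vacuous here since all bundles involved are complex and hence canonically oriented.
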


In the sequel, we want~\eqref{eq:9} to be surjective. However, it is more convenient to check that the dual map 
\begin{equation}\label{eq:10}
y \mapsto p^*(y) \smile e, y\in H^*(X,\Z).
\end{equation}
is injective on (singular) cohomology. Recall that the projection $p\colon \PP(T_X) \to X$ is a map between compact oriented manifolds, so we have the pushforward map on cohomology
$$p_!\colon H^*(\PP(T_X),\Z) \to H^{*-2n +2} (X,\Z) $$
defined via the Poincar\'{e} duality. Here $n=\dim X$ is the complex dimension of~$X$.
\begin{prop}\label{prop:2} $p_!(e)= n c_1(L)-2c_1(T_X)$.
\end{prop}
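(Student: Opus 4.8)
The plan is to reduce everything to the standard projective-bundle calculus on $\PP(T_X)$ and to exploit that we only need the degree-$2$ part $p_!(e)\in H^2(X,\Z)$. First I would record that, by Proposition~\ref{prop:1}, $e$ is the top Chern class of the quotient bundle $p^*(L\otimes\Omega^1_X)\otimes\Oh_{\PP(T_X)}(1)$, which has rank $r_1-r_2=\rk(L\otimes\Omega^1_X)=n$. Thus $e=c_n\bigl(p^*(L\otimes\Omega^1_X)\otimes\Oh_{\PP(T_X)}(1)\bigr)\in H^{2n}(\PP(T_X),\Z)$ and $p_!(e)\in H^2(X,\Z)$, consistent with the asserted degree. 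Write $\xi=c_1(\Oh_{\PP(T_X)}(1))$ and $F=L\otimes\Omega^1_X$, a bundle of rank $n$.

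The two key inputs are an expansion of $e$ in powers of $\xi$ and the fiber pushforwards of those powers. By the splitting principle, the Chern roots of $p^*F\otimes\Oh_{\PP(T_X)}(1)$ are obtained from those of $p^*F$ by adding $\xi$, whence
$$e=c_n\bigl(p^*F\otimes\Oh_{\PP(T_X)}(1)\bigr)=\sum_{k=0}^{n}p^*\bigl(c_{n-k}(F)\bigr)\,\xi^{k}.$$
For the pushforwards I would use that $\PP(T_X)\to X$ is the bundle of \emph{lines} in $T_X$ (as confirmed by the identification $p_*\Oh_{\PP(T_X)}(1)\cong\Omega^1_X$ in the excerpt), so the tautological inclusion $\Oh_{\PP(T_X)}(-1)\hookrightarrow p^*T_X$ is a nowhere-vanishing section of $p^*T_X\otimes\Oh_{\PP(T_X)}(1)$. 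Hence $c_n(p^*T_X\otimes\Oh_{\PP(T_X)}(1))=0$, and the same splitting-principle expansion yields the Grothendieck relation $\xi^{n}=-\sum_{k=1}^{n}p^*(c_k(T_X))\,\xi^{n-k}$. Together with the elementary fiber integrals $p_!(\xi^{j})=0$ for $0\le j\le n-2$ and $p_!(\xi^{n-1})=1$, this gives $p_!(\xi^{n})=-c_1(T_X)$.

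Applying $p_!$ to the expansion of $e$ and using the projection formula $p_!(p^*\alpha\smile\beta)=\alpha\smile p_!(\beta)$, only the terms $k=n-1$ and $k=n$ survive, so that
$$p_!(e)=c_1(F)\,p_!(\xi^{n-1})+c_0(F)\,p_!(\xi^{n})=c_1(L\otimes\Omega^1_X)-c_1(T_X).$$
Finally, since $F$ has rank $n$ and $\Omega^1_X=(T_X)^*$, we have $c_1(L\otimes\Omega^1_X)=n\,c_1(L)+c_1(\Omega^1_X)=n\,c_1(L)-c_1(T_X)$, and substituting yields $p_!(e)=n\,c_1(L)-2c_1(T_X)$, as claimed. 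The computation itself is routine Chern-class bookkeeping; the one point requiring care—and the place where a sign could slip—is the projectivization convention, i.e.\ confirming that the lines convention is in force so that the Grothendieck relation and the value $p_!(\xi^{n})=-c_1(T_X)$ carry the correct signs. This is precisely what the excerpt's identity $p_*\Oh_{\PP(T_X)}(1)\cong\Omega^1_X$ pins down, so I would invoke it explicitly at that step.
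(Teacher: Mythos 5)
Your proof is correct and follows essentially the same route as the paper: expand $e=c_n(p^*(L\otimes\Omega^1_X)\otimes\Oh_{\PP(T_X)}(1))$ in powers of $\xi=c_1(\Oh_{\PP(T_X)}(1))$ via the splitting principle, then apply $p_!$ using $p_!(\xi^{n-1})=1$, $p_!(\xi^n)=-c_1(T_X)$ and the projection formula. The only difference is that you derive the Grothendieck relation from the tautological section, whereas the paper simply quotes the standard presentation of $H^*(\PP(T_X))$; this is a cosmetic, not substantive, difference.
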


\begin{proof}
Recall that
$$H^*(\PP(T_X))=H^*(X)[c]/(c^n+c_1(T_X)c^{n-1}+\ldots+c_n(T_X)), $$
where $c=c_1(\Oh_{\PP(T_X)}(1))$ is the first Chern class of the line bundle $\Oh_{\PP(T_X)}(1)$. Note that $p_!(c^k)=0$ if $k<n-1$, $p_!(c^{n-1})=1$, and $p_!(c^n)=-c_1(T_X)$.
Set $E=L\otimes \Omega^1_X$, then by the splitting principle we have
$$e=c_n(p^*(E)\otimes \Oh_{\PP(T_X)}(1))=\sum_{i=0}^n c^{n-i} p^*(c_i(E)). $$
Finally, by the projection formula:
\begin{align*}
p_!(e)&=p_!(c^{n}+c^{n-1}p^*c_1(E))=-c_1(T_X)+c_1(\Omega^1_X\otimes L)\\
&=-c_1(T_X)+c_1(\Omega^1_X)+nc_1(L)=-2c_1(T_X)+nc_1(L). \qedhere
\end{align*}
\end{proof}

\begin{cor}\label{cor:1}
Let $X=\PP^n(\Co)$, $L=\Oh(d)$, $n>1$, $d>2$. Then the morphism $h$ (see~\eqref{eq:3}) is a surjective map of vector bundles over $\PP(T_X)$. Moreover, if $(n,d)\neq (2,3)$ and $*<\dim(\widetilde{\Sigma}(L))$, then the map
$$\varphi_*\colon\bm{*}(\widetilde{N}(L),\Q) \to \bm{*}(\widetilde{\Sigma}(L),\Q) $$
is surjective on the rational Borel-Moore homology.
\end{cor}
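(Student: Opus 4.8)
The plan is to treat the two assertions in turn. For the surjectivity of the bundle map $h$ from \eqref{eq:3}, I would work fiberwise over $\PP(T_X)$, using the concrete description \eqref{eq:4}: over a point $x\in X$ and a direction $[l]\in\PP(T_{x,X})$, the map sends $s\in\widetilde{\Sigma}_x(L)$ to the value $j_2(s)(x)(l)$ of its Hessian in the direction $l$. Thus it suffices to check two elementary surjectivities. First, that $\tilde{j}_2\colon\widetilde{\Sigma}(L)\to L\otimes\Sym^2(\Omega^1_X)$ is onto: since $\Oh(d)$ separates $2$-jets on $\PP^n(\Co)$ for $d\geq 2$, the evaluation $\Gamma(L)\to J^2_xL$ is surjective for every $x$, and restricting $\tilde{j}_2$ to $\widetilde{\Sigma}_x(L)=\{s:j_1(s)(x)=0\}$ still hits the whole kernel $L_x\otimes\Sym^2(\Omega^1_{x})$ of $J^2_xL\to J^1_xL$, by the exactness of \eqref{eq:1}. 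Second, that for a fixed nonzero $l$ the evaluation $Q\mapsto Q(l,-)$ from $\Sym^2(\Omega^1_x)$ to $\Omega^1_x$ is onto, which is immediate from linear algebra. Composing gives the fiberwise surjectivity of $h$, so $\widetilde{N}(L)=\ker h$ is a genuine subbundle and Proposition~\ref{prop:1} applies.

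For the second assertion, I would follow the reduction already indicated in the text: by Proposition~\ref{prop:1} the map $\varphi_*$ is, under the Thom isomorphisms, the cap product $y\mapsto p_*(y\frown e)$, and by Poincar\'e duality on the compact manifolds $X$ and $\PP(T_X)$ its surjectivity is equivalent to the injectivity of the dual cup-product map \eqref{eq:10}, $y\mapsto p^*(y)\smile e$, on the corresponding range of $H^*(X,\Q)$. So the whole problem becomes to show that multiplication by $e$ is injective on $p^*\bigl(H^{<2n}(\PP^n(\Co),\Q)\bigr)$.

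Here is where Proposition~\ref{prop:2} enters. I would use the projective-bundle description
$$H^*(\PP(T_X),\Q)=H^*(X,\Q)[c]/\bigl(c^n+c_1(T_X)c^{n-1}+\cdots+c_n(T_X)\bigr),$$
which exhibits $H^*(\PP(T_X),\Q)$ as a free $H^*(X,\Q)$-module with basis $1,c,\dots,c^{n-1}$. Writing $e=\sum_{i=0}^n c^{n-i}p^*(c_i(E))$ with $E=L\otimes\Omega^1_X$ as in the proof of Proposition~\ref{prop:2}, and reducing the single top term $c^n$ via the defining relation, the coefficient of $c^{n-1}$ in $p^*(y)\smile e$ is exactly $\bigl(c_1(E)-c_1(T_X)\bigr)\,y=p_!(e)\,y$. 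By Proposition~\ref{prop:2} and the values $c_1(L)=dH$, $c_1(T_X)=(n+1)H$ on $X=\PP^n(\Co)$ (with $H=c_1(\Oh(1))$), this equals $\bigl(n(d-2)-2\bigr)H\,y$. Since $n(d-2)-2\neq 0$ precisely when $(n,d)\neq(2,3)$ (using $n>1$, $d\geq 3$), and since multiplication by $H$ is injective on $H^m(\PP^n(\Co),\Q)$ for every $m<2n$, this leading coefficient is nonzero whenever $0\neq y\in H^{<2n}$. As $1,c,\dots,c^{n-1}$ is a free basis, a nonzero coefficient forces $p^*(y)\smile e\neq 0$, giving the desired injectivity on $H^{<2n}(X,\Q)$; the only classes on which \eqref{eq:10} can fail are those in the top group $H^{2n}(X,\Q)$, for degree reasons. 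Dualizing back through the Thom isomorphisms, $\varphi_*$ is surjective in all Borel--Moore degrees below the top one, which in particular covers the claimed range $*<\dim\widetilde{\Sigma}(L)$.

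The surjectivity of $h$ and the module-reduction bookkeeping are both routine; the real content, and the main obstacle, is the recognition that injectivity of \eqref{eq:10} can be detected on a single coefficient in the projective-bundle basis, namely the one computed by Proposition~\ref{prop:2}. This is what pins the exceptional locus to the vanishing $n(d-2)=2$, i.e.\ exactly to the excluded pair $(n,d)=(2,3)$, and explains why no finer analysis of the remaining coefficients $c_i(E)-c_i(T_X)$ is needed.
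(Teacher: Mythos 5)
Your proof is correct and follows essentially the same route as the paper: fiberwise surjectivity of $h$ by an explicit jet computation, then reduction of the surjectivity of $\varphi_*$ via Proposition~\ref{prop:1} to the non-vanishing of $p_!(e)=(nd-2(n+1))H$ from Proposition~\ref{prop:2}. The only cosmetic difference is that you verify injectivity of the dual map~\eqref{eq:10} by extracting the $c^{n-1}$-coefficient in the projective-bundle basis, whereas the paper argues surjectivity directly via the projection formula $p_!(p^*(z)\smile e)=z\smile p_!(e)$; these are the same computation.
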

\begin{proof}
The map $h$ is surjective by a straightforward computation. Indeed, $s\in \Gamma(\PP^n(\Co),\Oh(d))$ is a homogeneous polynomial of degree $d$ in $n+1$ variables. The condition that $j_1(s)([x])=0, [x]\in \PP^n(\Co)$ is equivalent to $x\neq 0$ being a critical point of $s$. Then $j_2(s)([x])$ is the matrix of the second derivatives of $s$ at $x$, i.e. the Hessian matrix of $s$ at $x$. Hence it suffices to show that for each $[x]\in \PP^n(\Co)$ there exists a polynomial $s$ of degree $d$ such that $x$ is a critical point of $s$ and the Hessian matrix of $s$ at $x$ has kernel of dimension $\geq 2$. The latter is clear.

By Proposition~\ref{prop:1} and the projection formula, it is enough to check that $p_!(e)\neq 0$ if $(n,d)\neq(2,3)$. By Proposition~\ref{prop:2}, we have 
$$p_!(e)=(nd-2(n+1))H\in H^2(\PP^n(\Co),\Z), $$
where $H=c_1(\Oh(1))$ is a multiplicative generator of $H^*(\PP^n(\Co),\Z)$. This expression is zero if and only if $(n,d)=(1,4)$ or $(n,d)=(2,3)$.
\end{proof}

Recall that $\Pi_{n,d}$ denotes the space of homogeneous polynomials of degree $d$ in $n+1$ variables $z_0,\ldots,z_n$, i.e. $\Pi_{n,d}=\Gamma(\PP^n(\Co),\Oh(d))$. We let
$$\Sigma_{n,d}=\{f\in \Pi_{n,d} \; | \; f\text{ has a critical point outside } 0\},$$
and we set $N_{n,d}$ to be the subvariety of $\Sigma_{n,d}$ formed by all $f$ such that the kernel of the Hessian matrix of $f$ at some nonzero critical point $x$ contains a $2$-plane $P \ni x$ (or, equivalently, the hypersurface $V(f)$ has a singularity other than a simple node).


\begin{cor}\label{cor:2}
Suppose that $n>1$, $d>2$, and $(n,d)\neq (2,3)$. For $l>1$ there exists a cohomology class $a_l\in H^*(\Pi_{n,d}\setminus N_{n,d},\Q)$ such that $a_l|_{\Pi_{n,d}\setminus \Sigma_{n,d}}=\Lk_l$.
\end{cor}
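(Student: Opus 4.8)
The plan is to convert the extension problem into a lifting problem in Borel--Moore homology by Alexander duality, and then to solve that lifting problem using the surjectivity of $\varphi_*$ from Corollary~\ref{cor:1}. Write $M=\dim_{\Co}\Pi_{n,d}$ and view $\Pi_{n,d}\cong\Co^{M}$ as the ambient affine space containing the closed subvarieties $N_{n,d}\subset\Sigma_{n,d}$. Alexander duality gives isomorphisms $H^{2l-1}(\Pi_{n,d}\setminus N_{n,d},\Q)\cong\bm{2M-2l}(N_{n,d},\Q)$ and $H^{2l-1}(U_{n,d},\Q)\cong\bm{2M-2l}(\Sigma_{n,d},\Q)$, under which (by naturality of Alexander duality for the inclusion of closed sets $N_{n,d}\hookrightarrow\Sigma_{n,d}$) the restriction map $H^{2l-1}(\Pi_{n,d}\setminus N_{n,d})\to H^{2l-1}(U_{n,d})$ becomes the proper pushforward $i_*\colon\bm{2M-2l}(N_{n,d})\to\bm{2M-2l}(\Sigma_{n,d})$. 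Since $\Lk_l$ is Alexander dual to $[\Sigma^{(l)}_{n,d}]$, a class $a_l$ restricting to $\Lk_l$ exists if and only if $[\Sigma^{(l)}_{n,d}]$ lies in the image of $i_*$.

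First I would realize $[\Sigma^{(l)}_{n,d}]$ through the jet resolution of the discriminant. Let $\pi\colon\widetilde\Sigma(\Oh(d))\to\Sigma_{n,d}$ be the projection to the second factor of $\PP^n(\Co)\times\Pi_{n,d}$, and let $\widetilde\Sigma^{(l)}\subset\widetilde\Sigma(\Oh(d))$ denote the restriction of this bundle over the fixed subspace $\Lambda^{n-l+1}$. A generic $f\in\Sigma^{(l)}_{n,d}$ has a single node, which lies on $\Lambda^{n-l+1}$, so $\pi$ restricts to a birational map $\widetilde\Sigma^{(l)}\to\Sigma^{(l)}_{n,d}$ and $\pi_*[\widetilde\Sigma^{(l)}]=[\Sigma^{(l)}_{n,d}]$. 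Since $\Sigma^{(l)}_{n,d}$ has codimension $l$ in $\Pi_{n,d}$, the class $[\widetilde\Sigma^{(l)}]$ lies in Borel--Moore degree $2(M-l)$, which is strictly below $\dim_{\R}\widetilde\Sigma(\Oh(d))=2(M-1)$ exactly when $l>1$.

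Second, I would lift and push down. By Corollary~\ref{cor:1} the map $\varphi_*\colon\bm{*}(\widetilde N(\Oh(d)))\to\bm{*}(\widetilde\Sigma(\Oh(d)))$ is surjective in all degrees below $2(M-1)$, so for $l>1$ there is a class $\beta$ with $\varphi_*\beta=[\widetilde\Sigma^{(l)}]$. The composite $\pi\circ\varphi\colon\widetilde N(\Oh(d))\to\Sigma_{n,d}$ is proper and has image contained in $N_{n,d}$, since $\varphi(\widetilde N(\Oh(d)))$ consists of pairs $(x,f)$ at which the Hessian of $f$ is degenerate, i.e. for which $V(f)$ has a singularity worse than a node. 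Thus $\pi\circ\varphi$ factors as $\widetilde N(\Oh(d))\xrightarrow{g}N_{n,d}\xrightarrow{i}\Sigma_{n,d}$ with $g$ proper, and hence $[\Sigma^{(l)}_{n,d}]=\pi_*\varphi_*\beta=i_*(g_*\beta)$ lies in the image of $i_*$. By the first step this produces the desired class $a_l$.

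The step I expect to be the main obstacle is the first one: making precise the naturality of Alexander duality, so that restriction of cohomology classes on the complements corresponds exactly to the closed-inclusion pushforward on Borel--Moore homology, and carrying out the degree bookkeeping. The bookkeeping is what ties the argument to the hypothesis $l>1$: for $l=1$ the class $[\Sigma^{(1)}_{n,d}]=[\Sigma_{n,d}]$ is the top-dimensional Borel--Moore class of the discriminant, sitting exactly at $\dim_{\R}\widetilde\Sigma(\Oh(d))$, where $\varphi_*$ is not surjective, so $\Lk_1$ genuinely fails to extend. The remaining geometric inputs --- that $\pi$ is birational onto $\Sigma^{(l)}_{n,d}$ and that $\pi\circ\varphi$ lands in $N_{n,d}$ --- are elementary facts distinguishing nodes from worse singularities and should be routine to verify.
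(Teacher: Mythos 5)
Your proof is correct and takes essentially the same route as the paper's: realize $[\Sigma^{(l)}_{n,d}]$ as $\pi_*[\widetilde\Sigma^{(l)}(L)]$, lift along $\varphi_*$ using Corollary~\ref{cor:1} (possible exactly when $l>1$ for degree reasons), push forward to $N_{n,d}$, and conclude by Alexander duality. The paper leaves the naturality of Alexander duality implicit and is terser about the degree bookkeeping, but the two arguments coincide.
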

\begin{proof}
Let $L=\Oh(d)$. We have that
$$\widetilde{\Sigma}(L)=\{(p,f) \in \PP^n(\Co)\times \Pi_{n,d} \; | \; df|_p=0\}.$$ We denote by $\widetilde{\Sigma}^{(l)}(L)$ the subvariety of $\widetilde{\Sigma}(L)$ given by $$\widetilde{\Sigma}^{(l)}(L)=\{(p,f) \in \Lambda^{n-l+1}\times \Pi_{n,d} \; | \; df|_p=0\} \subset \widetilde{\Sigma}(L),$$
where $\Lambda^{n-l+1}$ is the fixed linear subspace of $\PP^n(\Co)$. The projection map 
$$\pi\colon \widetilde{\Sigma}(L) \to \Sigma_{n,d} $$
is proper and generically finite of degree $1$. Moreover, $$[\Sigma_{n,d}^{(l)}]=\pi_*(b_l),$$
where $b_l=[\widetilde{\Sigma}^{(l)}(L)] \in \bm{*}(\widetilde{\Sigma}(L))$. By Corollary~\ref{cor:1}, $b_l=\varphi_*(c_l)$, $c_l\in \bm{*}(\widetilde{N}(L))$ if $l\neq 1$. Finally, let $\pi_1\colon \widetilde{N}(L) \to N_{n,d}$ be the projection map and $\iota\colon N_{n,d} \hookrightarrow \Sigma_{n,d}$ be the embedding. Then $[\Sigma^{(l)}_{n,d}]=\iota_*{\pi_1}_*(c_l)$ if $l>1$ and $a_l$ is the Alexander dual of ${\pi_1}_*(c_l)$.
\end{proof}



Let $G'=SL_{n+1}(\Co)\subset G$ be the special linear group.  Recall that the \emph{universal $G'$-bundle} $EG'$ is a contractible space with a free continuous right $G'$-action such that $EG' \to EG'/G'$  is a locally trivial fiber bundle. We denote by $H^*_{G'}(V_{n,d},\Q)$ the \emph{equivariant cohomology} of $V_{n,d}$, that is the cohomology of the \emph{homotopy quotient} $$(V_{n,d})_{hG'}=EG'\times_{G'} V_{n,d}.$$ 
For a more detailed account of equivariant cohomology we refer the reader e.g.\ to Part I of~\cite{Tu20}. Furthermore, since $G'$ is an algebraic group and $V_{n,d}$ is an algebraic $G'$-variety, we endow equivariant cohomology groups $H^*_{G'}(V_{n,d},\Q)$ with a (functorial) mixed Hodge structure. The construction of this mixed Hodge structure seems to be well-known and it is implicitly contained in~\cite{HodgeIII}; see e.g.~\cite[Proposition~A.5]{BG22} for more details.

There is a fiber sequence 
\begin{equation}\label{eq:11}
G' \to EG'\times V_{n,d} \to  EG'\times_{G'} V_{n,d},
\end{equation}
and the second component of the first map is $O'\colon G' \to V_{n,d}$ given by $g \mapsto g\cdot s_0$ for some $s_0\in V_{n,d}$. Using Theorem~\ref{Division theorem, PS} and Corollary~\ref{cor:2}, we immediately obtain:
\begin{thm}\label{Division theorem, nodal}
Suppose $n>1$, $d\geq 3$, and $(n,d)\neq (2,3)$. Then the classes ${O'}^*(a_l)\in H^*(G',\Q),l=2,\ldots, n+1$ are multiplicative generators. In particular, the Leray-Serre spectral sequence associated with~\eqref{eq:11} degenerates at $E_2$, i.e. there is an isomorphism of rings:
\begin{equation*}
H^*(V_{n,d},\Q) \cong H^*(G',\Q)\otimes H^*_{G'}(V_{n,d},\Q)
\end{equation*}
compatible with mixed Hodge structures. \qed
\end{thm}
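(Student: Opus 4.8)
The plan is to factor the orbit map and reduce the statement to the $GL$-case already settled by Theorem~\ref{Division theorem, PS}. First I would take the basepoint $s_0\in U_{n,d}\subset V_{n,d}$, which is legitimate because a smooth hypersurface is in particular nodal. Since $U_{n,d}$ is $G$-stable, $g\cdot s_0\in U_{n,d}$ for every $g\in G'=SL_{n+1}(\Co)$, and therefore the orbit map factors as $O'=j\circ O\circ \iota$, where $\iota\colon G'\hookrightarrow G$ is the inclusion $SL_{n+1}(\Co)\subset GL_{n+1}(\Co)$, $O\colon G\to U_{n,d}$ is the $GL$-orbit map, and $j\colon U_{n,d}\hookrightarrow V_{n,d}$ is the open inclusion. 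Passing to rational cohomology yields ${O'}^*=\iota^*\circ O^*\circ j^*$, and Corollary~\ref{cor:2} gives $j^*(a_l)=a_l|_{U_{n,d}}=\Lk_l$. Hence ${O'}^*(a_l)=\iota^*\bigl(O^*(\Lk_l)\bigr)$ for $l=2,\ldots,n+1$; in particular ${O'}^*(a_l)$ depends only on $\Lk_l$ and not on the chosen extension $a_l$.

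To recognize these as multiplicative generators I would pass to indecomposables. For a connected graded-commutative $\Q$-algebra $A$ a set of homogeneous elements generates $A$ if and only if their classes form a basis of $Q(A)=A^{>0}/(A^{>0})^2$; for the exterior algebras $H^*(G,\Q)=\Lambda(\xi_1,\xi_3,\ldots,\xi_{2n+1})$ and $H^*(G',\Q)=\Lambda(\eta_3,\ldots,\eta_{2n+1})$ this space is one-dimensional in each degree $2l-1$, with $l$ ranging over $1,\ldots,n+1$ respectively $2,\ldots,n+1$. The rational splitting of the fibration $G'\to G\xrightarrow{\det}\Co^*$ shows that $\iota^*$ is surjective and induces on indecomposables the projection that kills the degree-$1$ line and is an isomorphism in every higher degree. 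By Theorem~\ref{Division theorem, PS} the classes $O^*(\Lk_l)$, $l=1,\ldots,n+1$, represent a basis of $Q(H^*(G,\Q))$, with $O^*(\Lk_1)$ spanning the degree-$1$ line. Consequently their images ${O'}^*(a_l)=\iota^*(O^*(\Lk_l))$, $l=2,\ldots,n+1$, represent a basis of $Q(H^*(G',\Q))$ and hence generate $H^*(G',\Q)$ multiplicatively, which is the first assertion.

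For the degeneration I would use the Leray--Hirsch principle for the fibration~\eqref{eq:11}. Its fiber inclusion restricts $H^*(EG'\times V_{n,d},\Q)\cong H^*(V_{n,d},\Q)$ to $H^*(G',\Q)$ exactly via ${O'}^*$, and the local system $H^q(G',\Q)$ over $(V_{n,d})_{hG'}$ is trivial because $G'$ is connected. Since ${O'}^*$ is a ring map whose image contains a generating set, it is surjective, so the monomials in the $a_l$ mapping to the standard exterior basis of $H^*(G',\Q)$ restrict to a vector-space basis of the fiber cohomology; Leray--Hirsch then gives the $E_2$-degeneration and an $H^*_{G'}(V_{n,d},\Q)$-module isomorphism $H^*(V_{n,d},\Q)\cong H^*(G',\Q)\otimes H^*_{G'}(V_{n,d},\Q)$. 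To upgrade it to a ring isomorphism I would exploit that each $a_l$ has \emph{odd} degree $2l-1$, so $a_l^2=0$ and the $a_l$ anticommute; thus the assignment $\eta_{2l-1}\mapsto a_l$ extends to a ring homomorphism $\sigma\colon H^*(G',\Q)\to H^*(V_{n,d},\Q)$ which, since $H^*(G',\Q)$ is the free graded-commutative algebra on the ${O'}^*(a_l)$, is a section of ${O'}^*$. Composing with the pullback $\pi^*\colon H^*_{G'}(V_{n,d},\Q)\to H^*(V_{n,d},\Q)$ along the projection in~\eqref{eq:11}, the map $u\otimes v\mapsto \sigma(u)\,\pi^*(v)$ is a ring homomorphism by graded-commutativity, and it realizes the Leray--Hirsch module isomorphism, hence is a ring isomorphism.

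The only genuinely delicate point is compatibility with mixed Hodge structures, which I expect to be the step requiring the most care. Everything above is formal once one knows that ${O'}^*$, $\iota^*$, $j^*$ and $\pi^*$ are morphisms of mixed Hodge structures and that the $a_l$, being Alexander duals of algebraic cycles by Corollary~\ref{cor:2}, carry Hodge structures of the same Tate type as the $\Lk_l$. Granting the functorial mixed Hodge structure on $H^*_{G'}(V_{n,d},\Q)$ recalled before the theorem, all these maps are induced by algebraic morphisms and are strict for the Hodge and weight filtrations, so the constructed ring isomorphism is automatically an isomorphism of mixed Hodge structures; the remaining verification is bookkeeping.
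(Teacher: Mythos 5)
Your proposal is correct and fills in exactly the argument the paper leaves implicit ("Using Theorem~\ref{Division theorem, PS} and Corollary~\ref{cor:2}, we immediately obtain"): factor $O'$ through $U_{n,d}$ via a smooth basepoint, restrict the $a_l$ to $\Lk_l$, transport generators along $\iota^*\colon H^*(G,\Q)\to H^*(G',\Q)$, and conclude by Leray--Hirsch. The only point worth making explicit is that choosing $s_0\in U_{n,d}$ is harmless because $V_{n,d}$ is path-connected (Zariski-open in the irreducible $\Pi_{n,d}$), so the homotopy class of $O'$, hence ${O'}^*$, is independent of the basepoint.
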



\begin{rmk}
The fact that ${O'}^*(\Lk_l)\in H^*(G',\Q),l=2,\ldots, n+1$ are multiplicative generators implies that the connected component $(G'_s)^o$ of the stabilizer subgroup $G'_s\subset G'$ is unipotent for all $s \in V_{n,d}$. Using that one can see that Theorem~\ref{Division theorem, nodal} is false for $(n,d)=(2,3)$. Indeed, the polynomial $s=z_0z_1z_2$ is in $V_{2,3}$ and the stabilizer $G'_s$ contains a torus.
\end{rmk}

\begin{rmk}\label{remark:finite stabilizers}
If a geometric quotient $V_{n,d}/G'$ exists and the quotient map $q\colon V_{n,d} \to V_{n,d}/G'$ is affine, then, under the assumptions of Theorem~\ref{Division theorem, nodal}, the stabilizer subgroups $G'_s, s\in V_{n,d}$ are finite, see~\cite[Theorem~3.1.1 and Proposition~3.1.3]{GK17}. Furthermore, by ibid., Section 4.2.1, the order $|G'_s|$ divides in this case the following number
$$\prod_{i=2}^{n+1}((d-1)^{n+1}+(-1)^{i+1}(d-1)^{n+1-i}). $$
\end{rmk}

\begin{cor}\label{corollary:geometric quotient}
Suppose that $(n,d)$ are as in Theorem~\ref{Division theorem, nodal}, there exists a geometric quotient $V_{n,d}/G'$, and 
the stabilizer subgroups $G'_s, s\in V_{d,n}$ are finite. Then there is an isomorphism: $$H^*_{G'}(V_{n,d},\Q)\cong H^*(V_{n,d}/G',\Q),$$
and the Leray spectral sequence for $q$ degenerates at $E_2$.
\end{cor}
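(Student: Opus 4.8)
The plan is to treat the two assertions separately, deducing the degeneration statement from the cohomology isomorphism together with the degeneration already established in Theorem~\ref{Division theorem, nodal}. Throughout I write $\mu\colon EG'\times V_{n,d}\to (V_{n,d})_{hG'}$ for the (free) quotient map of the fibration~\eqref{eq:11}, so that $\mu$ is a principal $G'$-bundle, and I denote by $\pi\colon (V_{n,d})_{hG'}\to V_{n,d}/G'$ the map induced by the $G'$-invariant morphism $EG'\times V_{n,d}\to V_{n,d}/G'$, $(e,v)\mapsto q(v)$. The composite $\pi\circ\mu$ agrees with $q\circ\mathrm{pr}_2$, where $\mathrm{pr}_2\colon EG'\times V_{n,d}\to V_{n,d}$ is the projection, a homotopy equivalence.

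For the first assertion I would show that $\pi$ induces an isomorphism on rational cohomology. The fiber of $\pi$ over $[s]\in V_{n,d}/G'$ is $EG'\times_{G'}(G'\cdot s)=EG'\times_{G'}(G'/G'_s)=BG'_s$, and since $G'_s$ is finite we have $H^*(BG'_s,\Q)=\Q$ concentrated in degree $0$. Hence $R^0\pi_*\Q=\Q$ and $R^{j}\pi_*\Q=0$ for $j>0$, so the Leray spectral sequence of $\pi$ collapses and $\pi^*\colon H^*(V_{n,d}/G',\Q)\xrightarrow{\sim} H^*_{G'}(V_{n,d},\Q)$ is an isomorphism, compatible with mixed Hodge structures. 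The point requiring care is that $\pi$ is \emph{not} proper, so the identification of the stalks of $R^j\pi_*\Q$ with fiber cohomology is not automatic; I would deduce it from the local structure of the geometric quotient near a point with finite stabilizer (Luna's slice theorem), which makes $q$, and hence $\pi$, locally trivial in the analytic topology.

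For the degeneration of the Leray spectral sequence of $q$ I would compare it with the Leray--Serre spectral sequence of $\mu$ by means of the commutative square above. The pair $(\mathrm{pr}_2,\pi)$ induces a morphism $E_\bullet(q)\to E_\bullet(\mu)$ of Leray spectral sequences, both abutting to $H^*(V_{n,d},\Q)$ (for $\mu$ via the isomorphism $\mathrm{pr}_2^*$). Since $\mu$ is a principal $G'$-bundle and $G'$ is connected, $R^{j}\mu_*\Q$ is the constant sheaf $\underline{H^j(G',\Q)}$, so $E_2(\mu)^{p,j}=H^p((V_{n,d})_{hG'},\Q)\otimes H^j(G',\Q)$, and this spectral sequence degenerates at $E_2$ by Theorem~\ref{Division theorem, nodal}. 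It therefore suffices to prove that the morphism is an isomorphism on $E_2$, after which compatibility with the differentials forces all differentials of $E_\bullet(q)$ to vanish inductively, yielding the degeneration.

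To identify the $E_2$-terms I would argue as follows. The base-change map $\pi^*R^{j}q_*\Q\to R^{j}\mu_*\Q$ attached to the square is, on the stalk over $y\in(V_{n,d})_{hG'}$, the pullback $H^j(G'/G'_s,\Q)\to H^j(G',\Q)$ along the orbit covering $G'\to G'/G'_s$. Because $G'_s$ is finite and $G'$ is connected, the deck transformations (right translations) act trivially on $H^*(G',\Q)$, so this pullback is an isomorphism; hence the base-change map is an isomorphism of local systems. Feeding $\mathcal F=R^jq_*\Q$ into the coefficient version of the first assertion (the Leray spectral sequence of $\pi$ with coefficients in $\pi^*\mathcal F$ again collapses, by the projection formula together with $R^{>0}\pi_*\Q=0$) gives $H^p(V_{n,d}/G',\mathcal F)\xrightarrow{\sim}H^p((V_{n,d})_{hG'},\pi^*\mathcal F)\cong E_2(\mu)^{p,j}$, which is precisely the morphism on $E_2$. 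Thus the morphism is an isomorphism on $E_2$ and we conclude as above. The main obstacle, exactly as in the first assertion, is the local-triviality input needed to compute the stalks of $R^\bullet q_*\Q$ and $R^\bullet\pi_*\Q$ as fiber cohomology; once Luna's slice theorem supplies this, the remaining steps are formal.
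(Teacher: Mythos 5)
Your argument is correct in outline, but it takes a genuinely different route from the paper, which disposes of both assertions by citation. For the isomorphism $H^*_{G'}(V_{n,d},\Q)\cong H^*(V_{n,d}/G',\Q)$ the paper declares the statement folklore and points to Proposition~A.4 of the reference [BG22]; your argument --- the fibers of $\pi$ are $BG'_s$ with $G'_s$ finite, hence rationally acyclic, with Luna's slice theorem supplying the local triviality needed to identify the stalks of $R^j\pi_*\Q$ with fiber cohomology --- is precisely the standard proof of that folklore fact, so here you are filling in what the paper outsources. For the degeneration of the Leray spectral sequence of $q$, the paper again quotes a black box: it observes that ${O'}^*$ is surjective on rational cohomology (via Theorem~\ref{Division theorem, PS} and Corollary~\ref{cor:2}) and invokes Theorem~2 of [PS03], a Leray--Hirsch-type criterion that deduces degeneration directly from surjectivity of the orbit map. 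You instead compare $E_\bullet(q)$ with the Borel-construction spectral sequence $E_\bullet(\mu)$, whose degeneration is already recorded in Theorem~\ref{Division theorem, nodal}, checking that the base-change map is an isomorphism of local systems because $H^*(G'/G'_s,\Q)\to H^*(G',\Q)$ is an isomorphism for finite $G'_s$ acting on a connected group. This is a legitimate alternative: it recycles the degeneration already proved rather than re-running the Leray--Hirsch argument on the quotient, at the price of extra sheaf-theoretic bookkeeping (base change, the projection formula for the non-proper map $\pi$, local constancy of $R^jq_*\Q$), all of which rests on the same slice-theorem input you correctly flag. The one place deserving more care is that input itself: Luna's theorem in its standard form requires closed orbits in an affine $G'$-variety, whereas the corollary only assumes a geometric quotient exists; to make your sketch airtight you should either add the hypothesis that $q$ is affine (as in Remark~\ref{remark:finite stabilizers}, and as holds in the situation of Proposition~\ref{proposition:geometric quotient}) or justify local triviality over the affine charts of $V_{n,d}/G'$ by another argument.
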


\begin{proof}
The first statement seems to be folklore; for details see e.g. Proposition~A.4 and the remark after Theorem A.3 in~\cite{BG22}. For the second part we again apply Theorem~\ref{Division theorem, PS} and Corollary~\ref{cor:2} to show that the orbit map ${O'}^*$ is a surjective map of the rational cohomology. Thus the Leray spectral sequence for $q$ degenerates at $E_2$ by~\cite[Theorem 2]{PS03} and the example after it, cf.~\cite[Theorem~A.3]{BG22}.
\end{proof}

\begin{rmk}\label{remark:general point}
In Corollary~\ref{corollary:geometric quotient}, the finiteness of the stabilizer subgroups is not a restrictive assumption. Indeed, by the Matsumura-Monsky theorem~\cite[Theorem~1]{MM63}, the stabilizer subgroups $G'_s$ are finite if $s\in U_{n,d}$, $d\geq 3$, and $n\geq 3$. Moreover, if a geometric quotient $V_{n,d}/G'$ exists, then $\dim(G'_s)$ is a constant function on $V_{n,d}$ by the remark before Proposition~0.2 in~\cite{GITbook}. Thus $G'_s,s\in V_{n,d}$ are finite groups if $d\geq 3$, $n\geq 3$, and $V_{n,d}/G'$ exists. 
\end{rmk}

Finally, we show that the geometric quotient $V_{n,d}/G'$ exists if $d>n+1$.

\begin{prop}\label{proposition:geometric quotient}
There exists a geometric quotient $V_{n,d}/G'$ if $d>n+1$. Moreover, $q\colon V_{n,d} \to V_{n,d}/G'$ is affine.
\end{prop}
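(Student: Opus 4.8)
The plan is to exhibit $V_{n,d}/G'$ as an open subset of an affine GIT quotient. Since $G'=SL_{n+1}(\Co)$ is reductive and acts linearly on the affine space $\Pi_{n,d}$, there is an affine GIT quotient $\pi\colon \Pi_{n,d}\to \Pi_{n,d}/\!\!/ G'=\mathrm{Spec}\,\Co[\Pi_{n,d}]^{G'}$, and $\pi$ is an affine morphism. I would first recall the structure theorem for the properly stable locus: the set $\Pi_{n,d}^{s}$ of those $f$ whose orbit $G'\cdot f$ is closed and whose stabilizer is finite is open and $G'$-invariant, and $\pi$ restricts to a geometric quotient of $\Pi_{n,d}^{s}$ onto an open subset of $\Pi_{n,d}/\!\!/ G'$.

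The core of the argument is the claim that every $f\in V_{n,d}$ lies in $\Pi_{n,d}^{s}$ once $d>n+1$. Granting this, $V_{n,d}\subseteq \Pi_{n,d}^{s}$, and because each orbit in $V_{n,d}$ has the maximal possible dimension $\dim G'$ (finite stabilizer), no orbit outside $V_{n,d}$ can specialize into it; concretely, if $\pi(y)=\pi(x)$ with $x\in V_{n,d}$ then the unique closed orbit in that fiber is $G'\cdot x$, so $\overline{G'\cdot y}\supseteq G'\cdot x$ forces $\dim G'\cdot y=\dim G'$ and hence $G'\cdot y=G'\cdot x$, giving $y\in V_{n,d}$. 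Thus $V_{n,d}$ is $\pi$-saturated, i.e.\ $\pi^{-1}(\pi(V_{n,d}))=V_{n,d}$. Consequently $\pi(V_{n,d})$ is open, $q=\pi|_{V_{n,d}}\colon V_{n,d}\to \pi(V_{n,d})=:V_{n,d}/G'$ is a geometric quotient, and $q$ is affine as the restriction of the affine morphism $\pi$ over an open set.

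It therefore remains to prove stability, which I expect to be the main obstacle and where the hypothesis $d>n+1$ enters. I would apply the Hilbert--Mumford criterion for the standard linearization: $f$ is properly stable if and only if for every nontrivial one-parameter subgroup $\lambda$ of $G'$ some monomial of $f$ has negative $\lambda$-weight. Diagonalizing $\lambda$ and reordering the coordinates, write its weights as $a_0\le\cdots\le a_n$ with $\sum_i a_i=0$, not all equal, so that $a_0<0<a_n$, and for a monomial $z^I$ set $w(I)=\sum_i a_i I_i$. Arguing by contradiction, suppose $f\in V_{n,d}$ is not stable, so some such $\lambda$ satisfies $w(I)\ge 0$ for every monomial $z^I$ occurring in $f$.

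I would then examine the coordinate point $p=[1:0:\cdots:0]$. The monomials $z_0^d$ and $z_0^{d-1}z_i$ have weights $d a_0$ and $(d-1)a_0+a_i$; the first is negative, hence absent, while all of the second are absent as soon as $a_n<(d-1)|a_0|$. Using the elementary bound $a_n\le n|a_0|$ (from $a_i\ge a_0$ and $\sum_i a_i=0$), I split into two regimes. If $a_n\ge (d-1)|a_0|$, then $(d-1)|a_0|\le a_n\le n|a_0|$ forces $d\le n+1$. Otherwise $a_n<(d-1)|a_0|$, so $p$ is a nonzero critical point of $f$ of multiplicity at least $2$; since $f\in V_{n,d}$, this point is a node, whence its tangent cone $Q(z_1,\dots,z_n)$ is a nondegenerate quadratic form. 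Expanding the determinant of the symmetric matrix of $Q$, there is a permutation $\sigma$ of $\{1,\dots,n\}$ with each monomial $z_iz_{\sigma(i)}$ present in $Q$; the corresponding terms $z_0^{d-2}z_iz_{\sigma(i)}$ of $f$ obey $(d-2)a_0+a_i+a_{\sigma(i)}\ge 0$, and summing over $i$ with $\sum_{i\ge1}a_i=-a_0$ gives $a_0\bigl(n(d-2)-2\bigr)\ge 0$, so $n(d-2)\le 2$ and again $d\le n+1$. In either regime we contradict $d>n+1$, so no destabilizing $\lambda$ exists and $f$ is stable. The one delicate point I would check carefully is that the projective multiplicity-two condition at $p$, combined with the nodal hypothesis, translates precisely into the full-rank statement for $Q$ needed in the determinant step.
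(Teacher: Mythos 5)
Your proof is correct and follows the same overall strategy as the paper: reduce the statement to showing $V_{n,d}\subseteq \Pi_{n,d}^{s}$, apply the Hilbert--Mumford criterion to a diagonalized one-parameter subgroup, and examine the coordinate point $p=[1:0:\cdots:0]$. The difference is in the endgame. The paper never uses the nodal hypothesis inside the weight analysis: from $d>n+1$ and the ordering of the weights it deduces outright that the coefficients of $z_0^d$, $z_0^{d-1}z_i$, and of every $z_0^{d-2}z_iz_j$ except $z_0^{d-2}z_n^2$ must vanish, so the Hessian of $f$ at $p$ has rank at most one and $p$ is automatically a singularity worse than a simple node; i.e.\ it proves the cleaner contrapositive that every non--properly-stable $f$ lies outside $V_{n,d}$. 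You instead argue by contradiction from $f\in V_{n,d}$, split into two regimes according to the size of $a_n$, and in the second regime play the nondegeneracy of the Hessian at $p$ (extracting a permutation of nonzero entries from the determinant expansion) against the weight inequalities to reach the numerical contradiction $n(d-2)\le 2$. Both finishes are valid for $n\ge 2$; your final implication $n(d-2)\le 2\Rightarrow d\le n+1$ fails for $n=1$, but the paper's own argument (and indeed the statement itself, e.g.\ $z_0z_1^2\in V_{1,3}$ is not properly stable) also breaks down there, so this is a shared implicit restriction to $n>1$ rather than a defect of your write-up. Finally, your saturation argument showing that the geometric quotient restricts to the invariant open subset $V_{n,d}$ and remains affine is a correct, more explicit version of the paper's one-line appeal to Theorem 1.10 of Mumford's book.
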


\begin{proof}
The proof is based on the geometric invariant theory and we will use the terminology and notation from~\cite{GITbook}. Namely, we show that $V_{n,d}$ is contained in the subset of \emph{properly $G'$-stable} points $\Pi_{n,d}^s\subset \Pi_{n,d}$. More precisely, $\Pi_{n,d}^s$ is the preimage of the set of properly $G'$-stable points in the projective space $\PP(\Pi_{n,d})$ with respect to the standard $SL_{n+1}(\Co)$-linearization of $\Oh(1)$.  Then the geometric quotient $\Pi_{n,d}^s/G'$ exists (by Theorem~1.10 in ibid.), and so does $V_{n,d}/G'$. Moreover, the morphism $V_{n,d} \to V_{n,d}/G'$ is affine by ibid., Theorem 1.10(i).

We will show that if $f\in \Pi_{n,d}$ is not properly stable, then the hypersurface $V(f)$ has a singularity worse than a simple node. Suppose that $$f=\sum a_{i_0,\ldots,i_n}z_0^{i_0}\cdots z_n^{i_n} \in \Pi_{n,d}$$ is not a properly stable point. By Theorem~2.1 ibid., there exists a (non-trivial) one-parametric subgroup $\lambda\colon \Co^{*} \to G'$ such that $\mu(f,\lambda)\leq 0$. 
Any one-parametric reductive subgroup of $G'$ is conjugate to a one in the subgroup of diagonal matrices, so we can assume that
$$\lambda(t) =
\begin{pmatrix}
t^{r_0} & 0 & \cdots & 0\\
0 & t^{r_1} & \cdots & 0\\
\vdots & \vdots & \ddots & \vdots \\
0 & 0 & \ldots & t^{r_n}
\end{pmatrix},
$$
where $r_0,\ldots,r_n\in \Z$ and $r_0+\ldots+r_n=0$. By permuting coordinates, we assume further that $r_0\geq r_1\geq \ldots \geq r_n,$ cf.~\cite[Section~4.2, pp. 81-82]{GITbook}.
In this case,
$$\mu(f,\lambda) = \max\{ i_0r_0+\ldots +i_nr_n \;|\; a_{i_0,\ldots, i_n}\neq 0\}. $$

Since $d>n+1$, we have following inequalities for any sequence $r_0\geq\ldots \geq r_n$, $r_0+\ldots +r_n=0$: $$dr_0>0,$$ 
$$(d-1)r_0 +r_i > nr_0+r_i\geq r_0+\ldots+r_n=0, \; \text{if} \; 0<i\leq n; $$
$$(d-2)r_0 +2r_i > (n-1)r_0+r_i+r_{i+1}\geq r_0+\ldots+r_n=0, \; 0<i< n;$$
$$(d-2)r_0 +r_i +r_j > (n-1)r_0+r_i+r_j\geq r_0+\ldots+r_n=0, \; 0<i,j\leq n, \; i\neq j.$$ Since $\mu(f,\lambda)\leq 0$, these inequalities imply that the coefficients of $f$ for $z_0^d$, $z_0^{d-1}z_i$, $z_0^{d-2}z_i^2$ ($i\neq n$), $z_0^{d-2}z_iz_j$ are zero. Therefore, the point $p=(1,0,\ldots,0)$ is a critical point for $f$ and $\dim\ker\mathrm{Hess}_p(f)\geq 2.$
\end{proof}

\begin{rmk}\label{remark:geometric quotient}
It seems likely that the geometric quotient $V_{n,d}/G'$ exists for $3\leq d\leq n+1$ as well; at least this was proven in several particular cases. For instance, $V_{n,d}/G'$ exists if $(n,d)$ is $(3,3)$, $(4,3)$, and $(5,3)$ by~\cite[Proposition~6.5]{Beauville09}, \cite{Allcock03}, and~\cite{Laza09} respectively.
\end{rmk}

\medskip

\noindent {\bf Acknowledgments. }The author is grateful to Aleksandr Berdnikov and Alexey Gorinov for many helpful discussions.

\bibliographystyle{abbrv}
\bibliography{references}

\end{document}